 \newtheorem{thm}{Theorem}[section]
 \newtheorem{cor}[thm]{Corollary}
 \newtheorem{lem}[thm]{Lemma}
 \theoremstyle{definition}
 \theoremstyle{remark}
 \newtheorem{rem}[thm]{Remark}
 \numberwithin{equation}{section}
\begin{document}
	
	\title[]
	{On the $\kappa-$solutions of the Ricci flow on noncompact 3-manifolds  }
	
	\author{Liang Cheng, Anqiang Zhu}

	\dedicatory{}
	\date{}
	
	\subjclass[2000]{
		Primary 53C44; Secondary 53C42, 57M50.}

	\keywords{ Ricci flow, $\kappa$-solution, noncompact 3-manifold, Perelman's conjecture}
	\thanks{}
	
	\address{Liang Cheng, School of Mathematics and Statistics, Central China Normal University,
		Wuhan, 430079, P.R. CHINA}
	
	\email{chengliang@mail.ccnu.edu.cn}
	
	\address{Anqiang Zhu, School of Mathematics and Statistics, Wuhan University,
		Wuhan, 430072, P.R. CHINA}
	
	\email{aqzhu.math@whu.edu.cn} \maketitle
	
	\begin{abstract}
	In this paper we prove that there is no  $\kappa$-solution of Ricci flow on 3-dimensional noncompact manifold with strictly positive sectional curvature and blow up at some finite time $T$ satisfying 	
$\int^T_0 \sqrt{T-t} R(p_0,t)dt< \infty$	for some point $p_0$. This partially confirms a conjecture of Perelman.
	\end{abstract}

	\section{Introduction}
	In \cite{P1} Perelman conjectured that
  there is no three-dimensional noncompact $\kappa$-solution with positive sectional
curvature and blow-up at finite time. Recall the $\kappa$-solution is a complete non-flat ancient solution of Ricci flow that is $\kappa$-noncollapsed on all scales
	and has bounded nonnegative curvature at each time slice.

	Our result partially confirms Perelman's conjecture with an extra condition. The main result of this paper is following theorem.
	
	\begin{thm}\label{main_1}
	There is no  $\kappa$-solution of Ricci flow on 3-dimensional noncompact manifold with strictly positive sectional curvature and blow up at some finite time $T>0$ satisfying 	
	\begin{align}\label{curvature_rate}
	\int^T_0 \sqrt{T-t} R(p_0,t)dt<\infty
	\end{align}
	for some point $p_0$.
	\end{thm}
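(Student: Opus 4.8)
The plan is to argue by contradiction. Suppose $(M^{3},g(t))_{t\in(-\infty,T)}$ is a $\kappa$-solution satisfying the hypotheses of Theorem~\ref{main_1}. Since $M$ is noncompact with strictly positive sectional curvature, the soul theorem (Cheeger--Gromoll, Gromoll--Meyer) gives $M\cong\mathbb{R}^{3}$, and in dimension three positive sectional curvature is the same as positive curvature operator, so Hamilton's trace Harnack inequality applies to this ancient flow. In particular $\partial_{t}R\ge 0$ pointwise, so $R(p_{0},\cdot)$ is non-decreasing, and \eqref{curvature_rate} yields
\begin{align}\label{pp-rate}
\tfrac{2}{3}(T-t)^{3/2}R(p_{0},t)\le\int_{t}^{T}\sqrt{T-s}\,R(p_{0},s)\,ds\le\int_{0}^{T}\sqrt{T-s}\,R(p_{0},s)\,ds<\infty ,
\end{align}
hence $R(p_{0},t)\le C_{0}(T-t)^{-3/2}$ on $[0,T)$. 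If $R(p_{0},\cdot)$ stays bounded near $T$, then the local derivative estimates for $\kappa$-solutions show that a fixed neighbourhood of $p_{0}$ has bounded curvature on $[0,T)$, so the flow extends smoothly past $T$ there; the singularity is then localised, and the canonical-neighbourhood analysis below applies verbatim (this case does not even use \eqref{curvature_rate}). So assume from now on $R(p_{0},t)\to\infty$ as $t\to T$; note that the derivative estimate $|\partial_{t}R|\le\eta R^{2}$ then also forces $R(p_{0},t)\ge(\eta(T-t))^{-1}$.

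Next I would perform a blow-up at the singular time based at $p_{0}$: choose $t_{i}\nearrow T$, set $Q_{i}:=R(p_{0},t_{i})\to\infty$, and rescale $g_{i}(s):=Q_{i}g(t_{i}+Q_{i}^{-1}s)$ with basepoint $p_{0}$. The essential role of \eqref{curvature_rate} is quantitative: together with the integrated Harnack inequality it transports the bound $R(p_{0},t)\le C_{0}(T-t)^{-3/2}$ to points near $p_{0}$, and with Hamilton's distance-distortion estimate and Shi's local estimates this gives \emph{uniform} curvature bounds for the $g_{i}$ on $(-\infty,0]\times B(p_{0},\rho)$ for each $\rho$; $\kappa$-noncollapsedness is scale invariant. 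A subsequential pointed limit is then a complete ancient solution $(M_{\infty},g_{\infty}(s),p_{\infty})$, $s\le 0$, with bounded nonnegative curvature operator, $\kappa$-noncollapsed, and $R_{g_{\infty}}(p_{\infty},0)=1$ --- a non-flat $\kappa$-solution. Since $M\cong\mathbb{R}^{3}$ is noncompact, no pointed Cheeger--Gromov limit of it can be compact, so $M_{\infty}$ is noncompact as well.

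One then identifies $M_{\infty}$. By Hamilton's strong maximum principle a noncompact $3$-dimensional $\kappa$-solution either has strictly positive curvature, hence is diffeomorphic to $\mathbb{R}^{3}$, or is the round shrinking cylinder $S^{2}\times\mathbb{R}$ or its $\mathbb{Z}_{2}$-quotient. In the cylinder case, $M$ at scale $Q_{i}^{-1/2}$ near $p_{0}$ and for $t$ near $T$ is a long round neck whose $S^{2}$-cross-sections shrink to points at $T$, i.e. $M$ neck-pinches at $T$; since $M\cong\mathbb{R}^{3}$, the cross-section through $p_{0}$ cuts off a region diffeomorphic to $B^{3}$ (containing the soul), which closes up as the neck pinches, and by Hamilton's convergence theorem for closed $3$-manifolds of positive curvature its suitably rescaled limit is the round shrinking $S^{3}$; but a round $S^{3}$ arising as a pointed limit of geodesic balls in $M$ forces $M\cong S^{3}$, a contradiction. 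In the remaining case $M_{\infty}$ is again a noncompact $\kappa$-solution with strictly positive curvature; one then distinguishes whether $M_{\infty}$ is eternal --- then, its curvature maximum being attained, $M_{\infty}$ is the Bryant steady soliton, which never becomes singular, so the singularity of $M$ sits at a strictly smaller scale and one iterates the blow-up at the point of essential curvature maximum --- or $M_{\infty}$ itself becomes singular in finite time, in which case one iterates the construction for $M_{\infty}$; the iteration is arranged so as to terminate in the (just excluded) cylinder case. Throughout, this is consistent with the behaviour as $t\to-\infty$, where by Perelman's theory the asymptotic soliton of the noncompact $\kappa$-solution $M$ is again the round cylinder or its $\mathbb{Z}_{2}$-quotient.

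The hardest point I anticipate is this last step: excluding the eternal (Bryant soliton) alternative for a $\kappa$-solution that blows up in finite time, and organising the iteration of blow-ups so that one always reduces to the neck-pinch/round-$S^{3}$ scenario. A secondary, more technical, obstacle is the extraction of the uniform a priori estimates in the second paragraph --- pushing Harnack-type control to full parabolic neighbourhoods of $p_{0}$ and handling times near the final slice $s=0$ --- together with setting up rigorously the blow-up based at the singular time $T$.
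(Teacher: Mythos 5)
Your proposal has a genuine gap, in fact two. First, the contradiction you derive in the cylinder case does not work: if the blow-up limit at $(p_0,t_i)$ is the round shrinking cylinder, the region of $M\cong\mathbb{R}^3$ cut off by the neck cross-section is a compact $3$-ball \emph{with boundary}, so Hamilton's convergence theorem for closed $3$-manifolds of positive Ricci curvature does not apply to it, and there is no reason its rescalings converge to a round $S^3$ (in a degenerate pinching scenario the cap region can instead converge, after rescaling, to the Bryant soliton or fail to converge at the same scale). A neck-pinch forming on $\mathbb{R}^3$ is not by itself contradictory --- ruling out precisely this behaviour for $\kappa$-solutions is the content of Perelman's conjecture, so it cannot be assumed away. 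Second, and more seriously, you explicitly leave open the alternative in which the blow-up limit is a noncompact $\kappa$-solution with strictly positive curvature (the eternal/Bryant case) and propose an iteration of blow-ups ``arranged so as to terminate'' in the cylinder case; no mechanism is given that forces termination, and excluding this alternative is exactly the hard core of the problem. The preliminary step is also shaky: the bound $R(p_0,t)\le C_0(T-t)^{-3/2}$ obtained from \eqref{curvature_rate} and the Harnack inequality is weaker than Type I and does not by itself ``transport'' to uniform curvature bounds on parabolic neighbourhoods after rescaling by $Q_i=R(p_0,t_i)$ (though this particular point could be repaired by invoking Perelman's compactness theorem for $3$-dimensional $\kappa$-solutions). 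As it stands, the hypothesis \eqref{curvature_rate} plays no decisive role in your argument, which is a sign the strategy is not aligned with what the hypothesis is for.

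For comparison, the paper's proof never analyzes blow-up limits at the singular time by bare compactness and case analysis. Instead it uses \eqref{curvature_rate} exactly where it is needed: to show that the reduced length based at the singular time, $l_{p_0,0}=\lim_{s_i\to 0}l_{p_0,s_i}$, exists (the constant path at $p_0$ has finite $\mathcal{L}$-length precisely because of \eqref{curvature_rate}), so that the associated singular reduced volume $\mathcal {V}_{p_0,0}(\tau)$ is defined and monotone. Rescaling and letting $\tau\to 0$ and $\tau\to\infty$ produces two gradient shrinking solitons; an $\epsilon$-regularity lemma (Lemma \ref{regularity}) shows both are non-flat (otherwise $p_0$ would not be a blow-up point), and since $M\cong\mathbb{R}^3$ both must be the cylinder $S^2\times\mathbb{R}$, whose soliton potential can be computed explicitly. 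This forces the two limiting values of $\mathcal {V}_{p_0,0}$ to coincide, hence $\mathcal {V}_{p_0,0}(\tau)$ is constant, and Perelman's rigidity (Theorem \ref{Perelman}) makes $(M,g(t))$ itself a shrinking cylinder, contradicting strict positivity of the sectional curvature. If you want to salvage your approach you would need a genuinely new argument to exclude the Bryant-type alternative; otherwise the reduced-volume route is the one that actually exploits \eqref{curvature_rate}.
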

	
\begin{rem}
	We remark that (\ref{curvature_rate}) holds if
there exists some point $p_0$ such that $ R(p_0,t)\leq \frac{C}{(T-t)^\alpha}$ with any $\alpha<\frac{3}{2}$.
\end{rem}	

Since  the only orientable 2-dimensional $k$-solution for Ricci flow is $S^2$, by Hamilton's strong maximum principle
Theorem \ref{main_1} implies the following  classification  theorem for the $\kappa$-solutions of 3-dimensional noncompact manifold satisfying the condition (\ref{curvature_rate}): 
\begin{cor}\label{main_2}
	Let $(M^3,g(t))$ be the $\kappa$-solution to Ricci flow blow-up at finite time $T$ on 3-dimensional noncompact manifold. If
	there exists a point $p_0\in M$ such that
	the condition (\ref{curvature_rate}) holds,
	$(M^3,g(t))$ must be  $S^2\times \mathbb{R}$ or its quotient.
\end{cor}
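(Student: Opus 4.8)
First I would reduce Corollary~\ref{main_2} to Theorem~\ref{main_1}. A three--dimensional $\kappa$--solution has nonnegative curvature operator, so if its sectional curvature is not everywhere strictly positive then Hamilton's strong maximum principle (applied to the reaction--diffusion system satisfied by $\cur$) forces the universal cover to split isometrically as $\mathbb{R}\times(\Sigma^2,h(t))$, where $(\Sigma^2,h(t))$ is a two--dimensional $\kappa$--solution, hence a shrinking round $S^2$ (or $\mathbb{R}P^2$ in the nonorientable case); thus $(M^3,g(t))$ is $S^2\times\mathbb{R}$ or one of its quotients. Since Theorem~\ref{main_1} excludes the remaining case of strictly positive sectional curvature, this proves the Corollary. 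So the substance is Theorem~\ref{main_1}, which I would attack by contradiction.

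Assume $(M^3,g(t))_{t\in(-\infty,T)}$ is a $\kappa$--solution as in the statement. Since $M$ is noncompact with strictly positive sectional curvature, the soul theorem gives $M\cong\mathbb{R}^3$, and Perelman's structure theory for noncompact three--dimensional $\kappa$--solutions applies: the blow--down (asymptotic) shrinking soliton is the round cylinder $\mathbb{R}\times S^2$ or its $\mathbb{Z}_2$--quotient, and for every $t$ there is a compact core $C_t$ outside of which $(M,g(t))$ is an $\varepsilon$--tube assembled from $\varepsilon$--necks. I would also record two preliminary facts: (a) since the flow is ancient with bounded $\cur\ge 0$, Hamilton's trace Harnack inequality gives $\partial_t R\ge 0$ everywhere, so $t\mapsto R(x,t)$ is nondecreasing for each $x$; and (b) because the flow is singular at $T$, the doubling--time estimate forces $\sup_M R(\cdot,t)\ge c\,(T-t)^{-1}$ as $t\to T^-$. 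A first consequence of (a) together with (\ref{curvature_rate}): comparing $R(p_0,\cdot)$ with its own tail integral gives $R(p_0,s)\le \tfrac32(T-s)^{-3/2}\int_s^T\sqrt{T-t}\,R(p_0,t)\,dt$, whence $\int_0^T\sqrt{R(p_0,t)}\,dt<\infty$ as well.

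Next I would exploit the precise shape of (\ref{curvature_rate}). Writing $\tau=T-t$, the hypothesis is exactly $\int_0^T\sqrt{s}\,R(p_0,T-s)\,ds<\infty$, i.e.\ a bound on the $\mathcal{L}$--length of the constant path at $p_0$. Approximating the singular base point $(p_0,T)$ by $(p_0,t_k)$ with $t_k\uparrow T$ and using the interior curvature control for $\kappa$--solutions near $p_0$, I would pass to a reduced length $\ell=\ell_{(p_0,T)}$ for which the constant--path competitor yields
\begin{align*}
\ell(p_0,\tau)\ \le\ \frac{1}{2\sqrt\tau}\int_0^\tau\sqrt{s}\,R(p_0,T-s)\,ds ,
\end{align*}
so that $\sqrt\tau\,\ell(p_0,\tau)\to 0$ as $\tau\to 0^+$. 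Combined with the $L^\infty$ and gradient bounds for $\ell$ along the $\mathcal{L}$--geodesics of the $\kappa$--solution, this localizes a definite portion of Perelman's reduced volume $\widetilde V(\tau)$ near $p_0$ at every small scale; simultaneously, the bound $\int_0^T\sqrt{R(p_0,t)}\,dt<\infty$ with the distance--distortion estimate $\tfrac{d}{dt}\dist_t(p_0,\cdot)\ge -C\sqrt{R(p_0,t)}$ (valid outside $B_t(p_0,cR(p_0,t)^{-1/2})$) shows that the high--curvature region $\{R(\cdot,t)\gtrsim \sup_M R(\cdot,t)\}$ cannot approach $p_0$, so by (b) and the tube structure it recedes to spatial infinity along the tube. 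Feeding these two pieces of information into the monotonicity of $\widetilde V$ and into the compactness of $\kappa$--solutions (to extract the tangent flow at $(p_0,T)$ and at the receding singular region), I would force the solution to be a gradient shrinking soliton, which in the noncompact nonflat three--dimensional setting is only the round cylinder $\mathbb{R}\times S^2$ or its quotient --- contradicting strict positivity of the sectional curvature.

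The step I expect to be the main obstacle is the last one: running Perelman's $\mathcal{L}$--geometry and reduced--volume monotonicity with the base point at the \emph{singular} time $T$, and extracting a rigidity (soliton) statement from the merely integral hypothesis (\ref{curvature_rate}) rather than from a pointwise one. Concretely, one must control the $\mathcal{L}$--geodesics issuing from $(p_0,T)$ as they run down the shrinking tube and show that the $\varepsilon$--cap $C_t$ does not escape to infinity under the rescalings (otherwise the localization of $\widetilde V$ near $p_0$ is lost); and one must separately dispose of the degenerate possibility that $R(p_0,t)$ stays bounded as $t\to T$, in which case $p_0$ is a regular point, one cannot rescale at it, and the argument has to be run instead at the receding neck region where, by (b), the curvature actually blows up.
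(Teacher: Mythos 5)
Your first paragraph is precisely the paper's own derivation of Corollary \ref{main_2}: if the sectional curvature is not strictly positive, Hamilton's strong maximum principle splits (a cover of) the flow as a line times a $2$-dimensional $\kappa$-solution, which must be the round shrinking $S^2$, so $M$ is $S^2\times\mathbb{R}$ or a quotient; Theorem \ref{main_1} excludes the strictly positive case. Since Theorem \ref{main_1} is a stated result of the paper, this alone proves the Corollary, and your reduction is the same as the paper's.

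The remainder of your proposal --- an attempted re-proof of Theorem \ref{main_1} --- is both unnecessary for the Corollary and, as written, not a proof: the decisive step (upgrading the integral hypothesis (\ref{curvature_rate}) to gradient-shrinking-soliton rigidity) is only described, and you flag it yourself as the main obstacle; the localization of reduced volume near $p_0$ and the distance-distortion control of the receding high-curvature region are nowhere carried out, and the degenerate case you mention (bounded $R(p_0,t)$) is left open. For comparison, the paper's mechanism is different: it builds the singular reduced length $l_{p_0,0}$ as a limit of $l_{p_0,s_i}$ with $s_i\to 0$ (this is exactly where (\ref{curvature_rate}) enters, through the constant-path bound), takes rescaling limits as $\tau_i\to 0$ and $\tau_i\to+\infty$ to obtain two $\kappa$-noncollapsed shrinking solitons, shows both are non-flat (the small-$\tau$ limit via the $\epsilon$-regularity Lemma \ref{regularity}), identifies both with the cylinder, and then computes the soliton potential on $S^2\times\mathbb{R}$ explicitly ($l=\frac{1}{4}|y+2c_1|^2+1$) to conclude that the two asymptotic values of $\mathcal{V}_{p_0,0}$ coincide; monotonicity then makes $\mathcal{V}_{p_0,0}$ constant, and Theorem \ref{Perelman} forces $(M,g(t))$ itself to be a shrinking soliton, contradicting strict positivity. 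If you intend to rely on Theorem \ref{main_1} as given, your first paragraph suffices and the sketch should be dropped; if you intend to prove it, the rigidity step must be made precise along these (or genuinely equivalent) lines.
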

The main tool for the proof of Theorem \ref{main_1} is the singular
reduced entropy, which is obtained by taking the limit of the reduced length as the based point $(p_0,s_i)$ to the singular time. The singular reduced entropy was used by Naber \cite{N}
to show the asymptotic limit of Type I singularity of Ricci flow is shrinking soliton. By virtue of Hamiton's harnack inequality and Perelman's blow-down arguments, we can show that under the conditions of Theorem \ref{main_1} the singular reduced volume share the same value as time goes to blow-up time $T$ and goes to $-\infty$. Then the singular reduced volume is constant indepedent of time and hence the Ricci flow is a gradient Ricci soliton which contradicts to the postive sectional curvature. 

Finally, it is worth mentioning some related results. 
Hamilton \cite{RH2} showed that the only orientable 2-dimensional $k$-solution  is $S^2$.
Series works by Hamiton \cite{RH2},
 Daskalopoulos, Hamilton,
Sesum \cite{DHS}, Daskalopoulos and Sesum \cite{DH} classified 
the 2-dimensional ancient solutions with bounded curvature which is either
	 $S^2$,  $\mathbb{R}^2$, cigar steady soliton, 
	the King-Rosenau solution or their quotient.
	Ding \cite{Ding} proved that the only simply
	connected noncompact 3-dimensional $\kappa$-solution that forms a forward singularity of Type I is the
$S^2\times \mathbb{R}$. Zhang \cite{Z} and Hallgren \cite{Ha} independently showed 
only simply
connected noncompact 3-dimensional backward Type I $\kappa$-solution is the
$S^2\times \mathbb{R}$.		
	 Ni \cite{Ni} has proved that a closed Type I $\kappa$-solution with
	positive curvature operator of every dimension is a shrinking sphere or one of its
	quotients.	For the mean curvatue flow,
	 Brendle and Choi \cite{BC} recently proved that every noncompact ancient solution of mean curvature flow in $\mathbb{R}^3$ which is strictly convex and noncollapsed must be bowl soliton. 
	
	\section{preliminaries}
	
	 Perelman introduced in \cite{P1} the reduced entropy (i.e. reduced distance and reduced volume), which becomes one of powerful tools for studying Ricci flow.
The reduced entropy enjoys very nice analytic and geometric properties,
including in particular the monotonicity of the reduced volume. These
properties can be used, as demonstrated by Perelman, to show the limit of the suitable rescaled Ricci flows is a gradient shrinking soliton.

We recall some basic formulas and
	properties about reduced entropy in \cite{P1}.
	Let $g(t)$ solves the Ricci flow
	\begin{align}\label{Ricci_flow}
	\frac{\partial g}{\partial t}=-2Rc.
	\end{align}
	on $M\times
	(-\infty,T)$. Denote $h(\tau)=g(t)$ with $\tau(t)=T-t$.
The $l$-length is defined
	\begin{align}\label{reduced_l} l_{p,s}(q,\tau)=\inf\limits_{\gamma}\{\frac{1}{2\sqrt{\tau-s}}\int^{\tau}_s
\sqrt{\eta-s}(R_{h(\eta)}(\gamma(\eta))+|\gamma'(\eta)|_{h(\eta)}^2)d\eta\},
	\end{align}
where the infimum is taken over all curves $\gamma:[s,\tau]\to M$ with $\gamma(s)=p$
and $\gamma(\tau)=q$.
The reduced volume is definded as
\begin{align}\label{WFRV}
\mathcal {V}_{p,s}(\tau)=\int_{M} (4\pi (\tau-s))^{-\frac{n}{2}}
e^{-l_{p,s}(x,\tau)}
dvol_{h(\tau)}(x),
\end{align}

	\begin{thm}[Perelman]\label{FIN}
		Let $ l_{p,s}(q,\tau)$ be the reduced length defined in (\ref{reduced_l}) and $L_{p,s}(q,\tau)=2\sqrt{\tau-s}l_{p,s}(q,\tau)$. Then $ l_{p,s}(q,\tau)$ satisfies the following properties:
		\begin{align}
		&2\frac{\partial l_{p,s}}{\partial \tau}+|\nabla l_{p,s}|^2-R+\frac{l_{p,s}}{\tau-s}=0,\label{eq_l_1}\\
		&|\nabla l_{p,s}|^2=\frac{l_{p,s}}{\tau-s}-R-\frac{K}{(\tau-s)^{
				\frac{3}{2}}},\label{eq_l_2}\\
	&	(\frac{\partial }{\partial \tau}+\Delta)L_{p,s}\leq 2n,\label{eq_l_3}\\
		&	\frac{\partial l_{p,s}}{\partial \tau}-\Delta l_{p,s}+	|\nabla l_{p,s}|^2-R+\frac{l_{p,s}-n}{2(\tau-s)} \geq 0,\label{eq_l_4}\\
			&	2\Delta l_{p,s}-|\nabla l_{p,s}|^2+R+\frac{l_{p,s}-n}{\tau-s} \le 0.\label{eq_l_5}
		\end{align}
 For any $s$ and $\tau>s$, there exists $q^s_{\tau}\in M$ such that
$l_{p,s}(q^s_{\tau},\tau)\leq \frac{n}{2}$.	
The  reduced volume
defined in (\ref{WFRV}) is monotone non-increasing along
the backward Ricci flow $(M,h(\tau))$. 
Moreover, if the Ricci flow has nonnegative curvature operator, 
	\begin{align}
&|\nabla l_{p,s}|^2+R\leq c\frac{l_{p,s}}{\tau-s}\label{estimate3}\\
&l_{p,s}(x,\tau)\geq -l_{p,s}(y,\tau)-1+c\frac{d^2_{h(\tau)}(x,y)}{\tau-s}\label{estimate1}\\
&l_{p,s}^{\frac{1}{2}}(x,\tau) \leq l_{p,s}^{\frac{1}{2}} (y,\tau)+c\frac{d_{h(\tau)}(x,y)}{\sqrt{\tau-s}}\label{estimate2}
\end{align}
	\end{thm}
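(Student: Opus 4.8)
The plan is to recover the entire package from Perelman's $\mathcal{L}$-geometry \cite{P1}, which applies verbatim once the base time is shifted: with $\mathcal{L}(\gamma)=\int_s^{\tau}\sqrt{\eta-s}\,(R_{h(\eta)}(\gamma)+|\gamma'|^2_{h(\eta)})\,d\eta$ one has $L_{p,s}(q,\tau)=\inf_{\gamma}\mathcal{L}(\gamma)$ and $l_{p,s}=L_{p,s}/(2\sqrt{\tau-s})$, so the whole statement is Perelman's reduced-length theory centered at $\tau=s$ rather than at $0$, accessed through the substitution $\bar{\tau}=\tau-s$. First I would compute the first variation of $\mathcal{L}$. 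This yields the $\mathcal{L}$-geodesic equation $\nabla_X X-\tfrac12\nabla R+\tfrac{1}{2(\eta-s)}X+2\,\mathrm{Ric}(X,\cdot)=0$ and, for a minimizer terminating at $q$ with terminal velocity $X=\gamma'(\tau)$, the gradient identity $\nabla l_{p,s}=X(\tau)$. Combining the spatial and temporal first-variation formulas with the energy identity along $\gamma$ gives the Hamilton--Jacobi equation \eqref{eq_l_1} and the gradient identity \eqref{eq_l_2}, in which $K=\int_s^{\tau}(\eta-s)^{3/2}H(X)\,d\eta$ is the integral of Hamilton's trace Harnack expression $H(X)$ along $\gamma$. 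These equalities hold at points $q$ admitting a unique minimizing $\mathcal{L}$-geodesic with $q$ not $\mathcal{L}$-conjugate to $p$.

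Next I would take the second variation of $\mathcal{L}$. Using the standard test fields (fields parallel along $\gamma$, rescaled by $\sqrt{(\eta-s)/(\tau-s)}$), tracing, and invoking the $\mathcal{L}$-Jacobi equation together with the evolution equations for curvature yields the Laplacian comparison estimate for $L_{p,s}$, i.e. \eqref{eq_l_3}, equivalently $\Delta l_{p,s}\le -R+\tfrac{n}{2(\tau-s)}-\tfrac{K}{2(\tau-s)^{3/2}}$, interpreted in the barrier sense so as to remain valid across the $\mathcal{L}$-cut locus. The inequalities \eqref{eq_l_4} and \eqref{eq_l_5} are then purely algebraic: substituting \eqref{eq_l_1} and \eqref{eq_l_2} into this Laplacian bound and rearranging produces both stated forms. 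For the existence of a point with $l_{p,s}(q^s_{\tau},\tau)\le n/2$ I would apply Hamilton's minimum-principle trick to $\bar{l}(\tau)=\min_q l_{p,s}(\cdot,\tau)$: at a spatial minimum $\nabla l_{p,s}=0$ and $\Delta l_{p,s}\ge 0$, so the Laplacian bound forces $R+\tfrac{K}{2(\tau-s)^{3/2}}\le \tfrac{n}{2(\tau-s)}$ there, whence $\tfrac{d\bar{l}}{d\tau}\le \tfrac{n/2-\bar{l}}{\tau-s}$; since $\bar{l}\to 0$ as $\tau\to s^{+}$ (e.g. from the constant path at $p$), the comparison $\tfrac{d}{d\tau}\big((\tau-s)(\bar{l}-\tfrac{n}{2})\big)\le 0$ gives $\bar{l}\le n/2$ for all $\tau>s$.

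For the monotonicity of $\mathcal{V}_{p,s}$ in \eqref{WFRV} I would follow Perelman: \eqref{eq_l_5} is exactly the statement that the reduced density $u=(4\pi(\tau-s))^{-n/2}e^{-l_{p,s}}$ is a subsolution of the conjugate heat equation, $(\partial_{\tau}-\Delta+R)u\le 0$; combined with the volume-form evolution $\partial_{\tau}\,dvol_{h(\tau)}=R\,dvol_{h(\tau)}$ and integration by parts this gives $\tfrac{d}{d\tau}\mathcal{V}_{p,s}(\tau)\le \int_M \Delta u\,dvol_{h(\tau)}=0$. Under the extra hypothesis of nonnegative curvature operator, Hamilton's trace Harnack inequality gives $H(X)\ge 0$ along every $\mathcal{L}$-geodesic, hence $K\ge 0$; inserting this into \eqref{eq_l_2} yields $|\nabla l_{p,s}|^2+R\le \tfrac{l_{p,s}}{\tau-s}$, which is \eqref{estimate3}. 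Estimate \eqref{estimate2} then follows by bounding $|\nabla l_{p,s}^{1/2}|=\tfrac{|\nabla l_{p,s}|}{2l_{p,s}^{1/2}}\le \tfrac{c}{\sqrt{\tau-s}}$ and integrating along a minimal $h(\tau)$-geodesic from $x$ to $y$. Estimate \eqref{estimate1}, being a genuine quadratic lower bound (it does not follow from \eqref{estimate2} alone), I would obtain from the lower bound on the $\mathcal{L}$-length of any curve reaching a distant endpoint forced by $R\ge 0$, relative to the point $q^s_{\tau}$ where $l_{p,s}\le n/2$, transferring the base point via \eqref{estimate2}.

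The main obstacle is regularity at the $\mathcal{L}$-cut locus: $l_{p,s}$ is only locally Lipschitz and smooth off a closed set of measure zero, so every second-order statement must be read in the barrier or distributional sense, and differentiating $\mathcal{V}_{p,s}$ through the cut locus requires the Jacobian-comparison bookkeeping for the $\mathcal{L}$-exponential map rather than naive differentiation under the integral sign. A secondary technical point is justifying Hamilton's Harnack inequality in the final part, which requires the time-slices to have bounded nonnegative curvature and the solution to be complete and ancient; this is guaranteed for a $\kappa$-solution but must be invoked explicitly.
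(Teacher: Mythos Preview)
Your proposal is correct and follows the same approach as the paper: both reduce to Perelman's original $\mathcal{L}$-geometry via the time shift $\bar\tau=\tau-s$ (equivalently $\tilde h(\tau-s)=h(\tau)$) and invoke the standard references \cite{P1,MT,CCGGI}. The paper's proof consists solely of that citation and shift, whereas you additionally sketch the content of Perelman's argument (first and second variation, Laplacian comparison, the minimum-principle ODE for $\bar l$, the conjugate-heat subsolution for monotonicity, and the Harnack-based bounds under nonnegative curvature operator); this extra detail is accurate and consistent with the cited sources.
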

\begin{proof}
One can find the details of the proofs of Theorem \ref{FIN} for the version $s=0$	in \cite{P1} (also see in \cite{MT} and \cite{CCGGI}). Just shifting the time as $\tilde{h}(\tau-s)=h(\tau)$, one can get Theorem \ref{FIN}.
\end{proof}		

The following theorem by Perelman can be found the proof  in \cite{P1} (also see in \cite{MT} and \cite{CCGGI}).
	\begin{thm}[Perelman]\label{Perelman}
	If there exists
		$l\in C^{0,1}(M\times (0,+\infty))\cap W^{1,2}_{loc}(M\times (0,+\infty))$ with for $s=0$ (\ref{eq_l_1}), (\ref{eq_l_2}),(\ref{estimate3}) hold almost everywhere, (\ref{estimate1}),(\ref{estimate2}) hold and (\ref{eq_l_3}), (\ref{eq_l_4}) hold
		for distribution sense, morevoer $\mathcal {V}(\tau)\overset{def}{=}\int_{M} (4\pi \tau)^{-\frac{n}{2}}
		e^{-l(x,\tau)}
		dvol_{h(\tau)}(x)$ is constant under the backward Ricci flow $(M,h(\tau))$,
		then 
		 $(M,h(\tau))$ is gradient shriking soliton and
		$l$ is the smooth soliton function satisfying
		\begin{align}\label{3_7}
		Rc_{h}+\text{Hess} l=\frac{h}{2\tau},
		\end{align}
		and
		\begin{align}\label{3_6}
		2\Delta l- |\nabla l|^2 + R_{h}+ \frac{l-n}{\tau}
		= 0.
		\end{align}
	\end{thm}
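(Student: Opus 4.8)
The plan is to read the constancy of the reduced volume $\mathcal V$ through the pointwise (conjugate-heat) form of Perelman's monotonicity, extract the scalar soliton equation (\ref{3_6}) from the rigidity in the equality case, upgrade the regularity of $l$ from Lipschitz to smooth, and then promote the scalar equation to the full Hessian identity (\ref{3_7}) by means of Perelman's differential-Harnack computation.

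First I would set $v=(4\pi\tau)^{-\frac n2}e^{-l}$, so that $\mathcal V(\tau)=\int_M v\,dvol_{h(\tau)}$, and write $\Box^{*}:=\partial_\tau-\Delta+R$ for the conjugate heat operator of the backward flow $h(\tau)$ (recall $\partial_\tau h=2Rc$, hence $\partial_\tau\,dvol_h=R\,dvol_h$). A direct differentiation gives
\[
\Box^{*}v=-\Big(\partial_\tau l-\Delta l+|\nabla l|^2-R+\tfrac{n}{2\tau}\Big)v,
\]
and eliminating $\partial_\tau l$ by (\ref{eq_l_1}) rewrites the bracket as $-\tfrac12\big(2\Delta l-|\nabla l|^2+R+\tfrac{l-n}{\tau}\big)$, so that
\[
\Box^{*}v=\tfrac12\Big(2\Delta l-|\nabla l|^2+R+\tfrac{l-n}{\tau}\Big)v.
\]
Thus the inequality (\ref{eq_l_5}), which the hypotheses (\ref{eq_l_1}) and (\ref{eq_l_4}) deliver (almost everywhere, respectively distributionally), is precisely $\Box^{*}v\le 0$. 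Integrating this over $M$, using $\partial_\tau\,dvol_h=R\,dvol_h$ and $\int_M\Delta v\,dvol_h=0$, yields $\frac{d}{d\tau}\mathcal V(\tau)=\int_M\Box^{*}v\,dvol_h\le 0$; the noncompactness is absorbed by an exhaustion with cutoffs, the needed decay of $v$ and $\nabla v$ being furnished by the quadratic lower bound (\ref{estimate1}) and the gradient bound (\ref{estimate3}).

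Since $\mathcal V$ is assumed constant, $\frac{d}{d\tau}\mathcal V\equiv 0$; as $\Box^{*}v\le 0$ almost everywhere and $v>0$, this forces $\Box^{*}v=0$, i.e.
\[
2\Delta l-|\nabla l|^2+R+\frac{l-n}{\tau}=0\qquad\text{a.e.},
\]
which is (\ref{3_6}). Together with (\ref{eq_l_1}) this shows $l$ solves the semilinear heat equation $\partial_\tau l-\Delta l=R-|\nabla l|^2-\tfrac{n}{2\tau}$. Because $l\in C^{0,1}$ and $R$ is smooth, the right-hand side is locally bounded, so parabolic $L^p$ theory and Schauder bootstrapping upgrade $l$ to a smooth function; hence (\ref{3_6}) holds classically everywhere and $v$ is a smooth positive solution of $\Box^{*}v=0$.

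Finally, to pass from the scalar equation (\ref{3_6}) to the tensor equation (\ref{3_7}), I would invoke Perelman's (Ni's) pointwise Harnack identity for solutions of the conjugate heat equation: setting $\mathcal P:=\tau\big(2\Delta l-|\nabla l|^2+R\big)+l-n$, one has
\[
\Box^{*}\!\big(\mathcal P\,v\big)=-2\tau\,\Big|Rc_{h}+\text{Hess}\,l-\tfrac{1}{2\tau}h\Big|^{2}v.
\]
By (\ref{3_6}) we have $\mathcal P=\tau\big(2\Delta l-|\nabla l|^2+R+\tfrac{l-n}{\tau}\big)\equiv 0$, so the left-hand side vanishes identically; since $\tau>0$ and $v>0$, the right-hand side forces $Rc_{h}+\text{Hess}\,l=\tfrac{1}{2\tau}h$, which is (\ref{3_7}) and exhibits $(M,h(\tau))$ as a gradient shrinking soliton. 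The hard part will be the regularity step: one must promote the a priori only Lipschitz, weakly differentiable $l$ to a smooth function before the second-order identity above is even meaningful, and on the noncompact $M$ one must simultaneously justify every integration by parts using the decay afforded by (\ref{estimate1})--(\ref{estimate3}); by comparison, deriving the Harnack identity itself is a lengthy but routine Bochner-type computation.
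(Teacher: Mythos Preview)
The paper does not supply its own proof of this theorem; it merely states that ``the following theorem by Perelman can be found the proof in \cite{P1} (also see in \cite{MT} and \cite{CCGGI}).'' Your proposal correctly reconstructs the standard Perelman argument from those references---rewriting the reduced-volume monotonicity as $\Box^{*}v\le 0$, extracting the scalar identity (\ref{3_6}) from the equality case, bootstrapping regularity, and then applying Perelman's Harnack identity $\Box^{*}(\mathcal P v)=-2\tau|Rc+\mathrm{Hess}\,l-\tfrac{1}{2\tau}h|^{2}v$ to obtain (\ref{3_7})---so it is both correct and coincides with the approach the paper is deferring to.
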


	\section{The proof of Theoren \ref{main_1}}
	
Before presenting the proof of Theorem \ref{main_1}, we need the following $\epsilon$- regularity theorem for $\kappa$-solutions of Ricci flow.
\begin{lem}\label{regularity}
Let $(M,g(t))$ be the $\kappa$-solution to the Ricci flow on the 3-dimensional manifold for $t\in (-\infty,T]$. Denote $h(\tau)=g(t)$ with $\tau(t)=T-t$ and $\mathcal {V}_{p,s}$ be the reduced volume defined in (\ref{WFRV}). Then
there exists an universal constant $\epsilon_0$	
such that if $\mathcal {V}_{p,s}(\tau_0)\geq 1-\epsilon_0$, we have
 $r_{Rm}(p,s)\geq \epsilon_0 (\tau_0-s)$
where $r_{Rm}(p,s)=\sup\{r>0:\sup\limits_{B_{g(s)}(x,r)\times (s-r^2,s]}|Rm|\leq r^{-2}\}$.
\end{lem}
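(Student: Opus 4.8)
The plan is to argue by contradiction and a blow-up (compactness) argument. Suppose the lemma fails. Then there exist $3$-dimensional $\kappa$-solutions $(M_i,g_i(t))$, points $p_i$, based times $s_i$ and times $\tau_0^i>s_i$ with $\mathcal V_{p_i,s_i}(\tau_0^i)\ge 1-\tfrac1i$, but with $r_{Rm}(p_i,s_i)$ arbitrarily small relative to $\tau_0^i-s_i$. After a parabolic rescaling and a time shift we may assume $s_i=0$ and $\tau_0^i=1$, so that $\mathcal V_{p_i,0}(1)\ge 1-\tfrac1i$ while $r_{Rm}(p_i,0)\to 0$; we must reach a contradiction. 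Since $\mathcal V_{p,s}(\tau)\to1$ as $\tau\to s^+$, the monotonicity in Theorem \ref{FIN} gives $1-\tfrac1i\le\mathcal V_{p_i,0}(\tau)\le1$ for every $\tau\in(0,1]$, so the reduced volume is pinched near $1$ on the whole interval. Now rescale a second time: set $\lambda_i:=r_{Rm}(p_i,0)^{-2}\to\infty$ and parabolically rescale $g_i$ by $\lambda_i$, obtaining $\kappa$-solutions $\hat g_i$ with $r_{Rm}^{\hat g_i}(p_i,0)=1$ and, by scale invariance of the reduced volume, $\mathcal V^{\hat g_i}_{p_i,0}(\tau)\ge 1-\tfrac1i$ for all $\tau\in(0,\lambda_i]$.

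Next I would extract a limit flow. From $r_{Rm}^{\hat g_i}(p_i,0)=1$ we get $|Rm_{\hat g_i}|\le 1$ on the unit parabolic neighbourhood of $(p_i,0)$, in particular a curvature bound at $(p_i,0)$ itself; Perelman's ``curvature bounded at bounded distance'' estimate for $\kappa$-solutions then bounds the curvature on every fixed ball of the time-$0$ slice, and Hamilton's Harnack inequality (along which the curvature of an ancient solution with $Rc\ge0$ is monotone in $t$) together with Shi's local derivative estimates extend these bounds to every fixed parabolic neighbourhood and to all earlier times. Using $\kappa$-noncollapsing and Hamilton's compactness theorem we obtain, along a subsequence, a pointed smooth limit $(M_\infty,\hat g_\infty(t),p_\infty)$, again a $\kappa$-solution.

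I would then show the limit is flat. Under this convergence the reduced length $l^{\hat g_i}_{p_i,0}$ converges locally uniformly, and, using the Gaussian lower bound (\ref{estimate1}) on $l$ (together with the bound $l_{p_i,0}(p_i,\tau)\le C\tau$ obtained by testing with the constant path and the Harnack-controlled scalar curvature) and the Bishop--Gromov volume comparison from $Rc\ge0$, the tails of the reduced-volume integrals are uniformly small. Therefore $\mathcal V^{\hat g_\infty}_{p_\infty,0}(\tau)=\lim_i\mathcal V^{\hat g_i}_{p_i,0}(\tau)=1$ for every $\tau>0$, because $\lambda_i\to\infty$. By Perelman's rigidity theorem, Theorem \ref{Perelman}, $(M_\infty,\hat g_\infty)$ is a gradient shrinking soliton whose reduced volume is identically $1$; since its asymptotic reduced volume then equals $1$, Perelman's analysis of the equality case forces $(M_\infty,\hat g_\infty)$ to be the flat Gaussian soliton on $\mathbb R^3$. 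Hence $|Rm_{\hat g_\infty}|\equiv0$, so for $i$ large $|Rm_{\hat g_i}|$ is arbitrarily small on a fixed parabolic neighbourhood of $(p_i,0)$, which forces $r_{Rm}^{\hat g_i}(p_i,0)$ to be large and contradicts $r_{Rm}^{\hat g_i}(p_i,0)=1$. This contradiction proves the lemma.

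The step I expect to be the main obstacle is the extraction of the limit: producing a uniform curvature bound on a fixed parabolic region about $(p_i,0)$ from the single normalization $r_{Rm}=1$ is where Perelman's structure theory for $\kappa$-solutions is really used (``curvature bounded at bounded distance'', and the Harnack monotonicity that is needed because the reduced-volume integral lives on the long intervals $\tau\in(0,\lambda_i]$). A secondary technical point is the uniform tail estimate allowing the reduced volume to pass to the noncompact limit, for which the Gaussian decay of $e^{-l}$ in (\ref{estimate1}) is essential, and a third is the identification of a shrinking Ricci soliton with reduced volume $1$ as the flat $\mathbb R^3$.
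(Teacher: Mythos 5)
Your proposal is correct and follows essentially the same route as the paper's proof: after normalizing $s=0$, $\tau_0=1$, you rescale by $r_{Rm}(p_i,0)^{-2}$, use Perelman's curvature-bounded-at-bounded-distance estimate for $\kappa$-solutions plus Hamilton's Harnack and $\kappa$-noncollapsing to extract a pointed limit, show the limit's reduced volume is identically $1$, and invoke the rigidity of Theorem \ref{Perelman} to get a flat limit contradicting the normalization $r_{Rm}=1$. The only differences are cosmetic (you identify the flat Gaussian via the equality case of reduced-volume monotonicity, while the paper argues via bounded curvature of the shrinker as $\tau\to 0$), so no changes are needed.
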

\begin{proof}
By shifting time and rescaling, we may assume $s=0$ and $\tau_0=1$.
We argue by contradiction. In this case we have that there exists a sequence of $\kappa$-solutions $(M_i,g_i(t),p_i)$  such that
\begin{align}\label{3_2}
\mathcal {V}_{p_i,0}(1)\geq 1-\frac{1}{i}
\end{align}
and
$$
r_{Rm}(p_i,0) \leq  \frac{1}{i}.
$$
Define $r_i=r_{Rm}(p_i,0)$. Consider the rescaled flows $(M_i,\tilde{g}_i(t),p_i)$
with $\tilde{g}_i(t)=r_i^{-2}g_i(r_i^2t)$. Clearly, $\tilde{r}_{Rm}(p_i,0)=1$.
 By Perelman's result, we have $R_{\tilde{g}_i}(q,0)\leq C(r)$ if
$d_{\tilde{g}_i(0)}(q,p_0)\leq r$. Then by Hamilon's harnack inequality, we get $R_{\tilde{g}_i}(q,t)\leq C(r)$
if
$d_{\tilde{g}_i(0)}(q,p_0)\leq r$.	This allows us to pass to a subsequence to derive a pointed limit
$$(M_i,\tilde{g}_i(t),p_i) \overset{C^{\infty}}{\to} (M_{\infty},\tilde{g}_{\infty}(t),p_{\infty}),$$
with
\begin{align}\label{radius}
\tilde{r}_{Rm}(p_\infty,0)=1.
\end{align}
Take the constant path $\gamma$ in $p_i$, we have $\tilde{l}_{p_i,0}(p_i,\tau)\leq \tilde{l}(\gamma)\leq \frac{3}{4}$ for $0\leq\tau\leq 1$,
where $\tilde{l}_{p_i,0}$ is the reduced length with respect to $\tilde{g}_i$. Then by  (\ref{estimate1}) and (\ref{estimate2})
\begin{align}\label{3_1}
-\frac{7}{4}+c\frac{d^2_{\tilde{h}_i(\tau)}(q,p_i)}{\tau}\leq \tilde{l}_{p_i,0}(q,\tau) \leq (\frac{\sqrt{3}}{2}+c\frac{d_{\tilde{h}_i(\tau)}(q,p_i)}{\sqrt{\tau}})^2.
\end{align}
Combining (\ref{estimate3}) and (\ref{3_1}), we obtain that passing by a
subsequence $\tilde{l}_{p_i,0} \overset{}{\longrightarrow} \tilde{l}_{p_{\infty},0}$  for $0\leq\tau\leq 1$,
where $\tilde{l}_{p_{\infty},0}$ is the reduced length with respect to $\tilde{g}_{\infty}(t)$. Hence
$\tilde{\mathcal {V}}_{p_{i},0}(\tau)\to \tilde{\mathcal {V}}_{p_{\infty},0}(\tau)$  for $0\leq\tau\leq 1$.  It follows from (\ref{3_2}) that
$\tilde{\mathcal {V}}_{p_{\infty},0}(1)=1$.
This implies $(M_{\infty},\tilde{h}_{\infty}(t))$ is the gradient solition with singular time at
$\tau=0$.
But then the only way for the curvature to stay bounded as
$\tau\to 0$ is if $(M_{\infty},\tilde{g}_{\infty}(t))$ is flat for all time. This contradicts (\ref{radius}) and thus proves the lemma.

\end{proof}

Now we give the proof of Theorem \ref{main_1}.	

\textbf{Proof of Theorem \ref{main_1}.}	

We argue by contradiction. If there exists  $\kappa$-solution of Ricci flow on 3-dimensional noncompact manifold with strictly positive sectional curvature and blow up at some finite time $T$ satisfying 	the condition
(\ref{curvature_rate})
for some point $p_0$.  Denote $h(\tau)=g(t)$ with $\tau(t)=T-t$. We divide the proof of Theorem \ref{main_1} into the following four steps. 

\textbf{Step 1}: By taking $s_i\to 0^-$, we show $l_{p_0,s_i}$ subconverges to a limit $l_{p_0,0}$.

 Take the constant path $\gamma$ in $p_0$, the condition (\ref{curvature_rate})
	implies
	\begin{align}
	l_{p_0,s_i}(p_0,\tau)\leq l(\gamma)\leq \frac{\int^T_{T-\tau}\sqrt{T-t}R(p_0,t)}{2\sqrt{\tau-s_i}}\leq C(\tau)
	\end{align}
	as $s_i\to 0$. 
	Combining this with (\ref{eq_l_1}), we obtain that there exists
	$l_{p_0,0}\in C^{0,1}(M\times (0,+\infty))$
	$$
	l_{p_0,s_i} \overset{}{\longrightarrow} l_{p_0,0},
	$$
	in $C^{0,\alpha}_{loc}(M\times (0,+\infty))$ and weakly in $W^{1,2}_{loc}(M\times (0,+\infty))$ with (\ref{eq_l_1}), (\ref{eq_l_2}),(\ref{estimate3}), (\ref{estimate1}),(\ref{estimate2}) hold and (\ref{eq_l_3}), (\ref{eq_l_4}) hold
	for distribution sense for $s=0$. Hence  $\mathcal {V}_{p,0}(\tau)\overset{def}{=}\int_{M} (4\pi \tau)^{-\frac{n}{2}}
	e^{-l_{p,0}(x,\tau)}
	dvol_{h(\tau)}(x)$ is monotone non-increasing under the backward Ricci flow $(M,h(\tau))$.
	
 By Theorem \ref{FIN},  for any fixed $\tau$ there exists $q^{s_i}_{\tau}\in M$  such that
	$l_{p,s_i}(q^{s_i}_{\tau},\tau)\leq \frac{n}{2}$. 
	Moreover, it follows from (\ref{estimate1})
	that
	$$
	cd_{h(\tau)}(p_0,q^{s_i}_{\tau})\leq 	(\tau-s_i)(l_{p_0,s_i}(p_0,\tau)+l_{p_0,s_i}(q^{s_i}_{\tau},\tau)+1)\leq C(\tau)
	$$
	as $s_i\to 0$. 
	Then for any fixed $\tau$, we have 	 $q^{s_i}_{\tau}\to q_{\tau}$ and
	\begin{align}
	l_{p_0,0}(q_{\tau},\tau)\leq \frac{n}{2}.
	\end{align}

\textbf{Step 2}:
Rescale the backward Ricci flow as   $h_i(\tau)=(\tau_i)^{-1}h(\tau_i\tau)$. 
Taking $\tau_i\to 0^-$ and $\tau_i\to +\infty$,
we show that $(M,h_i(\tau),q_{\tau_i})$ both subconverge to the  gradient shrinking solitons which is $\kappa-$noncollapsed on all scales.

Letting $s_i\to 0$ in (\ref{estimate1}) and (\ref{estimate2}), we obtain
\begin{align}
l_{p_0,0}(x,\tau)\geq -l_{p_0,0}(y,\tau)-1+c\frac{d^2_{h(\tau)}(x,y)}{\tau},\\
l_{p_0,0}^{\frac{1}{2}}(x,\tau) \leq l_{p_0,0}^{\frac{1}{2}} (y,\tau)+c\frac{d_{h(\tau)}(x,y)}{\sqrt{\tau}}.
\end{align}	
By the scaling property for $l_{p_0,0}$,
we obtain
\begin{align}
l^i_{p_0,0}(x,\tau)\geq -l^i_{p_0,0}(y,\tau)-1+c\frac{d^2_{h_i(\tau)}(x,y)}{\tau},\\
(l^i_{p_0,0})^{\frac{1}{2}}(x,\tau) \leq (l^i_{p_0,0})^{\frac{1}{2}} (y,\tau)+c\frac{d_{h_i(\tau)}(x,y)}{\sqrt{\tau}},
\end{align}	
where $l^i_{p_0,0}$ is the reduced length with respect to $h_i(\tau)$.
Since  $l_{p_0,0}(q_{\tau_i},\tau_i)\leq \frac{n}{2}$, 
$l^i_{p_0,0}(q_{\tau_i},1)\leq \frac{n}{2}$.  It follows from (\ref{estimate3}) and (\ref{eq_l_1})  that $l^i_{p_0,0}(q_{\tau_i},\tau)\leq C(\tau)$.	Then
\begin{align}
-C(\tau)-1+c\frac{d^2_{h_i(\tau)}(x,q_{\tau_i})}{\tau}\leq
l^i_{p_0,0}(x,\tau) \leq (C(\tau)^{\frac{1}{2}} +c\frac{d_{h_i(\tau)}(x,q_{\tau_i})}{\sqrt{\tau}})^2,\label{3_3}\\
|\nabla l^i_{p_0,0}|^2(x,\tau)+R_{h_i(\tau)}(x,\tau)\leq \frac{c}{\tau}(C(\tau)^{\frac{1}{2}} +c\frac{d_{h_i(\tau)}(x,q_{\tau_i})}{\sqrt{\tau}})^2.
\end{align}	
Hence $(M,h_i(\tau),q_{\tau_i})$ both subconverge to the $\kappa-$noncollapsed on all scales Ricci flows $(M^\pm_{\infty},h^\pm_{\infty}(\tau),q^\pm_{\infty})$ as  $\tau_i\to 0^-$ and $\tau_i\to +\infty$. 	Moreover, there exist
$l^\pm\in C^{0,1}(M^\pm_{\infty}\times (0,+\infty))$ such that
$
l^i_{p_0,0}\circ \phi_i \overset{}{\longrightarrow} l^{\pm}
$
in $C^{0,\alpha}_{loc}(M^\pm_{\infty}\times (0,+\infty))$ and weakly in $W^{1,2}_{loc}(M^\pm_{\infty}\times (0,+\infty))$ as  $\tau_i\to 0^-$ and $\tau_i\to +\infty$  with (\ref{eq_l_1}), (\ref{eq_l_2}),(\ref{estimate3}), (\ref{estimate1}),(\ref{estimate2}) hold and (\ref{eq_l_3}), (\ref{eq_l_4}) hold
for distribution sense for $s=0$.
 By (\ref{3_3}), scaling and monotone property of reduced volume,
we have
\begin{align}\label{3_4}
\mathcal {V}^{\infty}_+(\tau)=\lim\limits_{\tau_i\to+\infty}
\mathcal {V}^{\tau_i}_{p_0,0}(\tau)=\lim\limits_{\tau_i\to+\infty}\mathcal {V}_{p_0,0}(\tau_i\tau)\equiv c^+,
\end{align}
and
\begin{align}\label{3_5}
\mathcal {V}^{\infty}_-(\tau)=\lim\limits_{\tau_i\to 0^-}
\mathcal {V}^{\tau_i}_{p_0,0}(\tau)=\lim\limits_{\tau_i\to 0^-}\mathcal {V}_{p_0,0}(\tau_i\tau)\equiv c^-.
\end{align}
Then Theorem \ref{Perelman} implies
 $(M^\pm_{\infty},h^\pm_{\infty}(\tau),q^\pm_{\infty})$
are gradient shriking solitons and
$l^\pm$ are soliton functions satisfying
\begin{align}\label{3_7}
Rc_{h^\pm_{\infty}}+\text{Hess} l^\pm=\frac{h^\pm_{\infty}}{2\tau},
\end{align}
and
\begin{align}\label{3_6}
2\Delta l^\pm- |\nabla l^\pm|^2 + R_{h^\pm_{\infty}}+ \frac{l^\pm-3}{\tau}
= 0.
\end{align}

\textbf{Step 3}:
Next we show that the limits $(M^\pm_{\infty},h^\pm_{\infty}(\tau),q^\pm_{\infty})$ are non-flat.
If the limit shrinking soliton $(M^-_{\infty},h^-_{\infty}(\tau),q^-_{\infty})$ is 
flat, $(M^-_{\infty},h^-_{\infty}(\tau))$ is  isometric to Euclidean space.
This implies
 $\lim\limits_{\tau\to 0^-}\mathcal {V}_{p_0,0}(\tau)=1$ by (\ref{3_5}). Hence
 there exists $\tau_0>0$ such that $\mathcal{V}_{p_0,0}(\tau_0)\geq 1-\frac{\epsilon_0}{2}$. By
 Lebesgue theorem, we have $\mathcal{V}_{p_0,s_i}(\tau)\to \mathcal {V}_{p_0,0}(\tau)$ for any $\tau$ as $s_i\to 0$. Then there exists $N>0$ such that $\mathcal{V}_{p_0,s_i}(\tau_0)\geq 1-\epsilon_0$ when $i\geq N$. By Theorem \ref{regularity}, $r_{Rm}(p_0,s_i)\geq \epsilon_0 |\tau_0-s_i|$ for $i\geq N$.
 Hence $\limsup\limits_{s_i\to 0}|Rm|(p_0,s_i)\leq (\epsilon_0\tau_0)^{-2}$.
  This contradicts that $p_0$ is a blow-up point. Then we have proved that 
$(M^-_{\infty},h^-_{\infty}(\tau))$ is non-flat and $\lim\limits_{\tau\to 0^-}\mathcal {V}_{p_0,0}(\tau)<1$. Since $\mathcal {V}_{p_0,0}(\tau)$ is monotone 
non-increasing in $\tau$, we have $\lim\limits_{\tau\to +\infty}\mathcal {V}_{p_0,0}(\tau)<1$. Then by (\ref{3_4}), similar arguments show that $(M^+_{\infty},h^+_{\infty}(\tau))$ is non-flat.

\textbf{Step 4}:
Finally, we show that $(M,g(t))$ is shrinking soliton on $S^2\times \mathbb{R}$ which contradicts that  $(M, g(t))$ has the strictly positive sectional curvature.

 Note that only the non-flat $\kappa$-noncollapsed noncompact 3-dimensional shrinking soliton
 are  $S^2\times \mathbb{R}$, $RP^2\times \mathbb{R}$ and $(S^2\times \mathbb{R})/Z_2$(see \cite{P1}). Since $(M,g(t))$ has positive sectional curvature, it is diffeomorphic to $\mathbb{R}^3$, and $S^2\times \mathbb{R}$ is the only one that can
 arise as the limit of a sequence of Ricci flows that are diffeomorphic to $\mathbb{R}^3$. It follows that both  $(M^\pm_{\infty},h^\pm_{\infty}(\tau),q^\pm_{\infty})$ are $S^2\times \mathbb{R}$.
 
  Denoting by $x$ the coordinates on $S^2$ and by $y$ the coordinates on $\mathbb{R}$, the both potentials also can be splitted as $l^\pm(x,y,\tau)=l_{S^2}^\pm(x,\tau)+l_{\mathbb{R}}^\pm(y,\tau)$ with $l_{S^2}^\pm(x,\tau)$ and $l_{\mathbb{R}}^\pm(y,\tau)$ both satisfying the soliton equation
 (\ref{3_7})  on $S^2$ and $\mathbb{R}$ respectively.  
 By (\ref{3_7}) and  the compactness for  $S^2$,  we have both
 $l_{S^2}^\pm(x,1)$ are constants.
 It follows from  the soliton equation on $\mathbb{R}$  that we have
$l^\pm(x,y,1)=\frac{1}{4}y^2+c^\pm_1y+c^\pm_2$. 
Since $l^\pm(x,y,1)$ satisfies 
(\ref{3_6}), we conclude that
$c^\pm_2=(c^\pm_1)^2+1$ and hence
$l^\pm(x,y,1)=\frac{1}{4}|y+2c^\pm_1|^2+1$. Then $\mathcal {V}^{\infty}_-(\tau)=
\mathcal {V}^{\infty}_+(\tau)$ and
 $\lim\limits_{\tau\to +\infty}\mathcal {V}_{p_0,0}(\tau)=\lim\limits_{\tau\to 0^-}\mathcal {V}_{p_0,0}(\tau)<1$, 
and hence $\mathcal {V}_{p_0,0}(\tau)\equiv constant$.
It follows from Theorem \ref{Perelman} that
$(M,g(t))$ is shrinking soliton on $S^2\times \mathbb{R}$ which contradicts that  $(M, g(t))$ has the strictly positive sectional curvature.
	 $\Box$


\begin{thebibliography}{99}
		
		\bibitem{BC}
		Brendle S, Choi K. Rotational symmetry of ancient solutions to mean curvature flow in $\mathbb{R}^3$. 2017.
		
		\bibitem{Chu}
		Chu, Sun-Chin. Type II ancient solutions to the Ricci flow on surfaces. Comm. Anal.
		Geom. 15 (2007), no. 1, 195-215.
		
		\bibitem{CCGGI}
		B.Chow, S.C.Chu, D.Glickenstein, C. Guenther, J. Isenberg, T.
		Ivey, D.Knopf, P.Lu, F.Luo, and L.Ni, \emph{The Ricci flow: techniques and applications.}
		Part I, Mathematical Surveys and Monographs, vol. 135, American Mathematical Society,
		Providence, RI, 2007, Geometric aspects.
		
		\bibitem{DH}
		Daskalopoulos P, Sesum N. Eternal Solutions to the Ricci Flow on $\mathbb{R}^2$. International Mathematics Research Notices, 2006, 2006(83610):83610.
		
		\bibitem{DHS}
		Daskalopoulos P, Hamilton R, Sesum N. Classification of ancient compact solutions to the Ricci flow on surfaces. Journal of Differential Geometry, 2012, 91(2):171-214.
		
		\bibitem{Ding}
		 Yu Ding. A remark on degenerate singularities in three dimensional Ricci flow. Pacific
		Journal of Mathematics, 240(2):289?308, 2009
		\bibitem{EMT}
		J.Enders, R.M\"{u}ller, P.M.Topping, \emph{On Type I Singularities
			in Ricci flow}, http://arxiv.org/abs/1005.1624v1.
		
		\bibitem{Ha} Max Hallgren. The nonexistence of noncompact type-i ancient 3-d ?-solutions of Ricci
		flow with positive curvature. arXiv preprint arXiv:1801.08643, 2018.
		
		\bibitem{MT}
		J. Morgan, G.Tian, \emph{Ricci flow and the Poincar$\acute{e}$
			conjecture}, Clay Mathematics Monographs Volume 3, American
		Mathematical Society, Providence, RI, USA, 2007.
		
		\bibitem{N}
		A.Naber, \emph{Noncompact shrinking 4-solitons with nonnegative
			curvature}, http://arxiv.org/abs/0710.5579v2.
	\bibitem{Ni}
		Lei Ni. Closed Type-I ancient solutions to Ricci flow. Recent Advances in Geometric
		Analysis, 11:147?150, 2009.
		
		\bibitem{RH2} R.Hamilton, \emph{The formation of singularities in the Ricci flow, Surveys in differential
			geometry}, Vol. II (Cambridge, MA, 1993), Internat. Press,
		Cambridge, MA, 1995. 7-136
		
		\bibitem{RH3} R.Hamilton, \emph{Three-manifolds with positive Ricci curvature}.
		J. Differential Geom., 2(1982)255-306.
		
		
		
		\bibitem{MT} J.Morgan, T.Gang. Ricci flow and the Poincare conjecture. American Mathematical Society :, 2007.
		
		
		\bibitem{P1} G.Perelman, \emph{The entropy formula for the Ricci flow and its
			geometric applications.} http://arxiv.org/abs/math/0211159.
		
		\bibitem{P2} G.Perelman, \emph{Ricci flow with surgery on three-manifolds}.
		http://arxiv.org/abs/math/0303109v1.
		
		\bibitem{P3} G.Perelman, \emph{Finite time extinction
			for the solutions to the Ricci flow on certain three-manifold}.
		http://arxiv.org/abs/math/0307245.

\bibitem{Z}
Zhang Y. On three-dimensional Type I $\kappa$-solutions to the Ricci flow. arXiv preprint arXiv:1708.02341, 2017.
		
	\end{thebibliography}
\end{document}